\theoremstyle{plain}
\newtheorem{thm}{Theorem}
\newtheorem{lemma}[thm]{Lemma}
\newtheorem{cor}[thm]{Corollary}
\newtheorem{conj}[thm]{Conjecture}
\newtheorem*{Directed Path Partition Conjecture (DPPC)}{Directed Path Partition Conjecture (DPPC)}
\newtheorem*{ques*}{Question}
\title{Extended Path Partition Conjecture for Semicomplete and Acyclic Compositions}
\author{
Jiangdong Ai\thanks{Department of Computer Science. Royal Holloway University of London.  {\tt Jiangdong.Ai.2018@live.rhul.ac.uk}.} \and Stefanie Gerke\thanks{Department of Mathematics. Royal Holloway University of London.  {\tt stefanie.gerke@rhul.ac.uk}.} \and Gregory Gutin \thanks{Department of Computer Science. Royal Holloway University of London. {\tt g.gutin@rhul.ac.uk}.} \and Yacong Zhou\thanks{Department of Computer Science. Royal Holloway University of London. {\tt Yacong.Zhou.2021@live.rhul.ac.uk}.} }
\begin{document}
 \maketitle

 \begin{abstract}
Let $D$ be a digraph and let $\lambda(D)$ denote the number of vertices in a longest path of $D$. For a pair of vertex-disjoint induced subdigraphs $A$ and $B$ of $D$, we say that $(A,B)$ is a partition of $D$ if $V(A)\cup V(B)=V(D).$
The Path Partition Conjecture (PPC) states that for every digraph, $D$, and every integer $q$ with $1\leq q\leq\lambda(D)-1$, there exists a partition $(A,B)$ of $D$ such that $\lambda(A)\leq q$ and $\lambda(B)\leq\lambda(D)-q.$ Let $T$ be a digraph with vertex set $\{u_1,\dots, u_t\}$ and for every $i\in [t]$, let $H_i$ be a digraph with vertex set $\{u_{i,j_i}\colon\,
 j_i\in [n_i]\}$.
The {\em composition} $Q=T[H_1,\dots , H_t]$  of $T$  and $H_1,\ldots, H_t$ is a digraph with vertex set $\{u_{i,j_i}\colon\,  i\in [t],  j_i\in [n_i]\}$ and arc set
$$A(Q)=\cup^t_{i=1}A(H_i)\cup \{u_{i,j_i}u_{p,q_p}\colon\, u_iu_p\in A(T), j_i\in [n_i], q_p\in [n_p]\}.$$ We say that $Q$ is acyclic {(semicomplete, respectively)} if $T$ is acyclic {(semicomplete, respectively)}. In this paper, we introduce a conjecture stronger than PPC using a property first studied by Bang-Jensen, Nielsen and Yeo (2006)
and show that the stronger conjecture holds for wide families of acyclic and semicomplete compositions.
 \end{abstract}

\section{Introduction}\label{sec:intro}
Let $G$ be a directed or undirected graph. The {\em order} of $G$ is the number of vertices in $G.$ Let $\lambda(G)$ be the maximum order of a path in $G.$ (In this paper all paths and cycles in digraphs are directed.)
A {\em partition} of $G$ is a pair $(A,B)$ of vertex-disjoint induced subgraphs of $G$ such that $V(A)\cup V(B)=V(G).$  {Terminology and notation for digraphs not introduced in this section follows those in \cite{BG,DiClasses}. }

\vspace{1mm}

The {\em Path Partition Conjecture} (PPC) can be stated as follows.

\begin{conj}\label{PPC}[Path Partition Conjecture (PPC)]
Let $G$ be a directed or undirected graph. For every natural number $q$ with $0<q<\lambda(G)$ there is a partition $(A,B)$ of $G$ such that $\lambda(A)\leq q$ and $\lambda(B)\leq \lambda(G)-q.$
\end{conj}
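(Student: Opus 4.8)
The fully general statement of Conjecture~\ref{PPC} is a well-known open problem, so the realistic target is to prove it (indeed, a stronger ``BNY-flavoured'' version) for the classes announced in the abstract; the discussion below is a plan towards that goal.

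The one case that is completely elementary is that of \emph{acyclic} digraphs, and it is worth isolating because it will be reused. For an acyclic digraph $D$ and a vertex $v$, let $h(v)$ be the number of vertices on a longest path of $D$ that ends at $v$. If $uv\in A(D)$ then $v$ cannot lie on a longest path ending at $u$ (otherwise that path together with $uv$ would close a cycle), so appending $v$ shows $h(v)\ge h(u)+1$. Hence $h$ is strictly increasing along every path, $h(V(D))\subseteq\{1,\dots,\lambda(D)\}$, and each $L_i=h^{-1}(i)$ is independent. Taking $A=D[L_1\cup\cdots\cup L_q]$ and $B=D[L_{q+1}\cup\cdots\cup L_{\lambda(D)}]$ gives $\lambda(A)\le q$ and $\lambda(B)\le\lambda(D)-q$. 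Note, however, that an \emph{acyclic composition} $Q=T[H_1,\dots,H_t]$ need not be an acyclic digraph --- only the quotient $T$ is acyclic, while the factors $H_i$ may contain cycles --- so this argument does not apply directly and will be used only inside the individual acyclic factors.

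For a composition $Q=T[H_1,\dots,H_t]$ the plan is to reduce the construction of a good partition to two levels: the quotient $T$ and the factors $H_i$. Contracting each block $V(H_i)$ of $Q$ to the vertex $u_i$ sends a path of $Q$ to a walk in $T$, and conversely $\lambda(Q)$ is controlled by a ``weighted longest walk'' problem on $T$ in which the weight picked up while the walk sits at $u_i$ is the total order of a suitable system of vertex-disjoint paths in $H_i$. When $T$ is acyclic this walk is in fact a path of $T$ visiting each block in one contiguous segment, so $\lambda(Q)=\max\big\{\sum_{j=1}^{k}\ell(H_{i_j})\big\}$ over paths $u_{i_1}\cdots u_{i_k}$ of $T$, where $\ell(H_i)$ is $\lambda(H_i)$ for internal blocks and an ``entering/leaving'' variant for the two end blocks. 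When $T$ is semicomplete one instead exploits the classical structure of (semi)complete digraphs --- existence of a Hamiltonian path, and Camion/Moon-type Hamiltonicity and vertex-pancyclicity of strong semicomplete digraphs --- to describe exactly which block-visiting sequences are realisable, with the caveat that a strong component of $T$ lets a path of $Q$ return to a block, so a block may be visited in several segments.

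With this dictionary in hand, the partition is built by first splitting the ``quotient level'' at the right place and then, inside each block, invoking the property of Bang-Jensen, Nielsen and Yeo (2006): the vertex set of $H_i$ admits a partition into two induced subdigraphs whose longest paths have prescribed (small) orders summing correctly. Re-running the weighted-walk analysis on $A$ and on $B$ separately should then certify $\lambda(A)\le q$ and $\lambda(B)\le\lambda(Q)-q$. The main obstacle I expect is the semicomplete case: the cyclic freedom inside a strong component of $T$ lets a longest path of $Q$ thread through many blocks in an order that is awkward to ``cut'' at level $q$, and reconciling this freedom with the rigid length bookkeeping forced inside the blocks --- while keeping \emph{both} parts' longest paths short --- is the crux. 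In the acyclic case the analogous difficulty is milder, since the order of blocks along any path of $Q$ is essentially fixed, but one must still handle factors $H_i$ that lack a Hamiltonian path, and it is precisely there that the Bang-Jensen--Nielsen--Yeo property does the work.
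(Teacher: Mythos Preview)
The statement is a conjecture, and the paper does not prove it in general; like you, it establishes the (stronger, BNY) version only for acyclic and semicomplete compositions. So the relevant comparison is between your plan and the paper's proofs of those two special cases.

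For acyclic compositions your plan coincides with the paper's. The paper's functions $p_i^{\mathrm{in}}$ and $p_i^{\mathrm{end}}=p_i^{\mathrm{in}}+\lambda(H_i)$ are exactly your weighted height on $T$, and its three-way split (whole $H_i$ into $A$ if $p_i^{\mathrm{end}}\le q$, whole $H_i$ into $B$ if $p_i^{\mathrm{in}}\ge q$, or a BNY-partition of $H_i$ when $p_i^{\mathrm{in}}<q<p_i^{\mathrm{end}}$) is precisely ``split at the quotient level, then invoke BNY inside the critical block.'' One small correction: in a composition the bipartite piece between adjacent blocks is complete, so there is no ``entering/leaving variant'' at end blocks; $\ell(H_i)=\lambda(H_i)$ always.

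For semicomplete compositions there is a genuine gap. The paper does \emph{not} argue via Camion/Moon pancyclicity or by classifying realisable block-visiting sequences. It relies instead on a sharp structural fact (its Theorem~\ref{T4}, built on a lemma of Bang-Jensen, Nielsen and Yeo for extended semicomplete digraphs): for every $k$, \emph{every} maximum $k$-path subdigraph of $Q$ covers exactly the same number $l_{i,k}$ of vertices in each block $H_i$, whence $\lambda_k(Q)=\sum_i l_{i,k}$. This additivity is what makes the bookkeeping go through: one chooses indices with $\sum l_{i_j}\le q$, observes that the blocks with $l_j<|H_j|$ induce an acyclic subtournament of $T$ (so a path in $A$ visits each of them once), and applies BNY only inside the single block $H_m$ with $l_m$ maximal to absorb the residue $q-\sum l_{i_j}$. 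Your diagnosis of the difficulty (``cyclic freedom inside a strong component of $T$'') is correct, but the tool that tames it is this exact-coverage lemma, not Hamiltonicity of $T$; without it there is no way to bound $\lambda_k(B)$ by $\lambda_k(Q)-q$, since a priori a maximum $k$-path subdigraph of $B$ could distribute itself among the $H_i$ quite differently from one of $Q$.
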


Note that the PPC trivially holds for every directed and undirected graph $G$ with a Hamilton path $P$ as  one can choose any induced subgraph on $q$ vertices  for $A$.  In general, the PPC remains far from proven.
Frick \cite{FM} mentioned that the PPC for undirected graphs was discussed by Lov\'{a}sz and Mih\'{o}k in $1981$ in Szeged. Several results supporting the conjecture for undirected graphs have been proved in the literature, see e.g. \cite{BI,BI2,BU,H,HW,V}. In 1982, Laborde et al. \cite{LC} extended Conjecture \ref{PPC} to digraphs. (Note that the PPC for undirected graphs is equivalent to the PPC for symmetric digraphs; a digraph $D$ is {\em symmetric} if it can be obtained from an undirected graph $G$ by replacing every edge $uv$ by the pair $uv,vu$ of arcs.) There is a number of papers in which the PPC is proven for special classes of digraphs, see e.g. \cite{AG,BMA,F,LC,WW}.

In this paper we consider a stronger notion which was introduced in \cite{BMA} by Bang-Jensen, Nielsen and Yeo which we call the BNY property. To state the property we need the following notation.
A {\em $k$-path subdigraph} of a digraph $D$ is a collection of $k$ vertex-disjoint paths in $D$. The maximum order of a $k$-path subdigraph $D$ will be denoted by $\lambda_k(D)$; clearly $\lambda(D)=\lambda_1(D).$

\vspace{2mm}

\noindent{\bf Property BNY.} {\em A digraph $D$ satisfies {\em BNY} if for every integer
$q$ with $1\le q\le \lambda(D)-1$ there is a partition $(A,B)$ of $D$ such that $\lambda(A)\leq q$ and   for all $k\in \{1,2,\dots , |V(B)|\}$, $\lambda_k(B)\leq \lambda_k(D)-q.$}

\vspace{2mm}

We conjecture that one can strengthen the PPC in the following way.

\begin{conj}\label{BYNC}
Property BNY holds for every digraph.
\end{conj}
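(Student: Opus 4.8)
The plan is to reduce the conjecture, as is standard for PPC‑type problems, to the case of strongly connected digraphs by passing to the condensation, and — since Property~BNY is a ``for all $k$ simultaneously'' statement — to organise the bookkeeping so that the partitions chosen on the strong components fit together. The two ingredients I would isolate are: (i) Property~BNY for acyclic digraphs, proved directly; and (ii) Property~BNY for strongly connected digraphs, which I expect to be the genuine obstacle.

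For the acyclic base case I would use the layering of a DAG $D$ by $\ell(v):=$ the order of a longest path of $D$ ending at $v$, and set $L_i=\{v:\ell(v)=i\}$ for $1\le i\le\lambda(D)$. Along any arc $vw$ one has $\ell(w)>\ell(v)$ (else $D$ has a cycle), so $\ell$ strictly increases along every path; moreover a longest path ending at $v$ meets $L_1,\dots,L_{\ell(v)}$ in exactly one vertex each. Given $q$ with $0<q<\lambda(D)$, take $A=D[L_1\cup\cdots\cup L_q]$ and $B=D[L_{q+1}\cup\cdots\cup L_{\lambda(D)}]$. Then $\lambda(A)\le q$ because $\ell$ takes only $q$ values on $A$. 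For $B$: given any $k$-path subdigraph $\mathcal P$ of $B$ of order $\lambda_k(B)$, pick the path $P^\ast\in\mathcal P$ whose start vertex $v^\ast$ has the smallest $\ell$-value (necessarily $\ell(v^\ast)\ge q+1$), prepend to $P^\ast$ a longest path of $D$ ending at $v^\ast$, and keep the other $k-1$ paths; using the layering one checks that the prepended path is vertex-disjoint from $P^\ast\setminus v^\ast$ (which lies in higher layers) and from all other paths of $\mathcal P$ (which lie in layers $\ge\ell(v^\ast)$ and avoid $v^\ast$), so the result is a $k$-path subdigraph of $D$ of order $\lambda_k(B)+q$. Hence $\lambda_k(B)\le\lambda_k(D)-q$ for every $k$, which settles BNY for DAGs (and, in particular, the trivial Hamilton-path case).

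Next I would set up the reduction. Let $D_1,\dots,D_t$ be the strong components of $D$ and let $T$ be its condensation, an acyclic digraph; every path of $D$ projects to a path of $T$, and every $k$-path subdigraph of $D$ projects to a family of $k$ paths of $T$ that may share vertices. Fixing $q$, I would first apply the acyclic result to a suitably weighted version of $T$ (each vertex $u_i$ carrying the profile $\lambda_1(D_i),\lambda_2(D_i),\dots$) to decide, for each component, a local budget $q_i\ge 0$ that together account for $q$; then use Property~BNY inside each $D_i$ with budget $q_i$ to split $V(D_i)$ into an ``$A$-part'' and a ``$B$-part'', and let $A,B$ be the unions of these parts. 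The point of using BNY (rather than PPC) on the components is precisely that the bound on $\lambda_{k'}$ of the $B$-part of $D_i$ holds for \emph{all} $k'$ at once, which is what is needed to add the local inequalities up along path-systems of the condensation into a single global inequality valid for every $k$. Making this addition rigorous — accounting for the fact that $k$ paths of $D$ can distribute arbitrarily among the components, and that between components of $D$ only some arcs are present, unlike in a composition — is exactly the type of combinatorial lemma this paper establishes in the special cases where the inter-component structure is acyclic or semicomplete.

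The main obstacle is therefore the case of a strongly connected $D$. Even for the ordinary PPC this is open, so new ideas are needed; I would try to exploit cycle structure, e.g. building $A$ from an initial segment (of length $\le q$) of a longest cycle, or of a Hamilton path of a well-chosen strong subdigraph, and arguing that what remains in $B$ cannot have large $k$-path subdigraphs without contradicting maximality in $D$. A second difficulty, specific to BNY and absent from the plain PPC, is that one is not allowed to re-choose the split of a component for each $q$-budget independently of $k$: the partition of each $D_i$ must work uniformly in $k'$, which makes the condensation gluing rigid. This rigidity is the reason the present paper proves the conjecture only for acyclic and semicomplete compositions, where the abundance of arcs between the pieces absorbs the slack, rather than in full generality.
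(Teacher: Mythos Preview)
The statement you are trying to prove is Conjecture~\ref{BYNC}, and the paper does \emph{not} prove it: it is stated as an open conjecture, and the paper only verifies it for acyclic and semicomplete compositions (Theorems~\ref{T5} and~\ref{T6}). So there is no ``paper's own proof'' to compare your attempt against.

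Your proposal is, as you yourself note, not a proof but a strategy with an explicitly identified gap. The gap is genuine and essentially the whole difficulty: the strongly connected case is open even for the weaker PPC, and nothing in your outline gives a mechanism for producing the partition there. A second gap, which you touch on but understate, is the reduction step itself. A general digraph is \emph{not} an acyclic composition of its strong components, because arcs between distinct components need not form complete bipartite patterns; consequently Theorem~\ref{T5} does not apply to the condensation, and your remark that ``this paper establishes'' the needed combinatorial lemma is inaccurate --- the paper handles only the composition setting, where all inter-block arcs are present. Making the gluing work when inter-component arcs are sparse is itself an open-ended problem.

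What is correct in your write-up is the acyclic base case: your layering argument for DAGs is valid and is exactly the specialisation of the proof of Theorem~\ref{T5} to the case where each $H_i$ is a single vertex (your $\ell(v)$ is the paper's $p_i^{end}$, and your prepended path is the paper's $P'$). That part stands on its own, but it does not advance the general conjecture beyond what the paper already proves.
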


 Bang-Jensen, Nielsen and Yeo \cite{BMA} proved that Conjecture~\ref{BYNC} holds for several generalizations of tournaments, see also Corollary~\ref{corBNY} in Section~\ref{S4}. In this paper we show that Conjecture~\ref{BYNC} holds for some compositions of digraphs. Let $T$ be a digraph with vertex set $\{u_1,\dots, u_t\}$ and  for every $i\in [t]$,  let $H_i$ be a digraph with vertex set $\{u_{i,j_i}\colon\,
 j_i\in [n_i]\}$.
The \emph{composition} of $T$ and $H_1,\dots, H_t$
is the digraph  $Q=T[H_1,\dots , H_t]$ with vertex set $\{u_{i,j_i}\colon\,  i\in [t],  j_i\in [n_i]\}$ and arc set
$$A(Q)=\cup^t_{i=1}A(H_i)\cup \{u_{i,j_i}u_{p,q_p}\colon\, u_iu_p\in A(T), j_i\in [n_i], q_p\in [n_p]\}.$$
See Figure \ref{fig:Q} for an example of a composition, $Q=T_3[H_1, H_2, H_3]$ where $V(H_1)=\{a,d\}$, $V(H_2)=\{b\}$, $V(H_3)=\{c\}$ and $T_3$ is a 3-cycle.

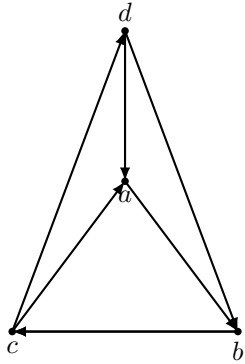
\begin{figure}
\begin{center}
\begin{tikzpicture}[>=stealth,thick]
\fill (0,1) circle (0.05);
\fill (0,3) circle (0.05);
\fill (1.5,-1) circle (0.05);
\fill (-1.5,-1) circle (0.05);
\node [below] at (0,1) {$a$};
\node [below] at (1.5,-1) {$b$};
\node [below] at (-1.5,-1) {$c$};
\node [above] at (0,3) {$d$};
\draw [-latex] (0,1)--(1.5,-1);
\draw [-latex] (1.5,-1)--(-1.5,-1);
\draw [-latex] (-1.5,-1)--(0,1);
\draw [-latex] (0,3)--(1.5,-1);
\draw [-latex] (1.5,-1)--(-1.5,-1);
\draw [-latex] (-1.5,-1)--(0,3);
\draw [-latex] (0,3)--(0,1);

\end{tikzpicture}
\end{center}\caption{$Q=T_3[H_1, H_2, H_3]$}\label{fig:Q}
\end{figure}

A composition $Q$ is {\em acyclic} if $T$ is acyclic, that is, $T$ contains no directed cycles.
Acyclic digraphs form a well-studied family of digraphs of great interest in graph theory, algorithms and applications (e.g. see Chapter 3 of \cite{DiClasses}).
 The main result of \cite{AG} states that if
 $Q=T[H_1,\dots , H_t]$ is an acyclic composition and the PPC holds for every digraph $H_i$, $i \in [t]$, then the  PPC holds for $Q$. We show that analogous result for the BNY property, see Section~\ref{S3}.

\begin{thm}\label{T5}
Let $Q=T[H_1,\dots,H_t]$ be an acyclic composition.
If  for every $i\in [t]$,  $H_i$ satisfies property BNY, then $Q$ satisfies property BNY.
\end{thm}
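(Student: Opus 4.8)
First, since $T$ is acyclic I would relabel $u_1,\dots,u_t$ so that it is a topological ordering, i.e.\ $u_iu_p\in A(T)$ implies $i<p$. For $i\in[t]$ set $p_i:=\max\bigl(\{0\}\cup\{h(u_j):u_ju_i\in A(T)\}\bigr)$ and $h(u_i):=p_i+\lambda(H_i)$. A routine observation (which also underlies the PPC argument of \cite{AG}) is that a directed path of $Q$ visits the blocks in strictly increasing order, and that for a fixed set of visited blocks one may assume a longest path of $H_i$ is used inside each of them, consecutive blocks being joined by all possible arcs of $Q$; this gives $\lambda(Q)=\max_i h(u_i)$, and the same description of $Q$-paths and $k$-path subdigraphs will be used throughout.

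Fix $q$ with $1\le q\le\lambda(Q)-1$. I would classify each block as \emph{$A$-full} if $h(u_i)\le q$, \emph{$B$-full} if $p_i\ge q$, and \emph{split} otherwise; since $\lambda(H_i)\ge1$ these three cases are mutually exclusive, and a split block satisfies $p_i<q<h(u_i)$, i.e.\ $1\le q-p_i\le\lambda(H_i)-1$. For every split block I apply property BNY to $H_i$ with parameter $q-p_i$ to obtain a partition $(A_i,B_i)$ of $H_i$ with $\lambda(A_i)\le q-p_i$ and $\lambda_k(B_i)\le\lambda_k(H_i)-(q-p_i)$ for all $k$. Let $A$ be the union of the $A$-full blocks and the $A_i$, and $B$ the union of the $B$-full blocks and the $B_i$; this is a partition of $Q$, and $B\neq\emptyset$ since not every block can be $A$-full ($\lambda(Q)>q$). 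The bound $\lambda(A)\le q$ is then a short computation: a path $W$ of $A$ visits blocks $m_1<\dots<m_r$, all $A$-full or split; the part of $W$ inside $H_{m_1},\dots,H_{m_{r-1}}$ has at most $h(u_{m_{r-1}})\le p_{m_r}$ vertices, while the part inside $H_{m_r}$ has at most $\lambda(H_{m_r})$ vertices if $m_r$ is $A$-full (so $|V(W)|\le h(u_{m_r})\le q$) and at most $\lambda(A_{m_r})\le q-p_{m_r}$ vertices if $m_r$ is split (so $|V(W)|\le q$).

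The heart of the argument is $\lambda_k(B)\le\lambda_k(Q)-q$ for every $k\in\{1,\dots,|V(B)|\}$, which I would prove by taking a maximum $k$-path subdigraph $W=W^1\cup\dots\cup W^k$ of $B$ and producing a $k$-path subdigraph $W'$ of $Q$ with $|V(W')|\ge|V(W)|+q$. The block sequence of each $W^l$ is a path of $T$ (its \emph{chain}); let $k_i$ count the chains through block $i$, so $W\cap H_i$ is a $k_i$-path subdigraph of $B\cap H_i$, and no chain meets an $A$-full block. Step one: for every split block $i$ with $k_i\ge1$, replace $W\cap H_i$ by a maximum $k_i$-path subdigraph of $H_i$; because inter-block connections of $Q$ are complete this reconnects to a $k$-path subdigraph of $Q$, and the number of vertices gained at block $i$ is at least $\lambda_{k_i}(H_i)-\lambda_{k_i}(B_i)\ge q-p_i\ge0$. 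Step two, first case: if some chain $W^j$ starts at a split block $m_j$ (so $k_{m_j}\ge1$), I prepend to $W^j$ a longest incoming path of $T$ into $u_{m_j}$; the key point is that such a path can be chosen to run through $A$-full blocks only, since along an optimal path into $u_{m_j}$ every prefix is itself optimal, so every $h$-value along it is at most $p_{m_j}<q$. Hence it is vertex-disjoint from all chains, and realizing it by longest paths inside its blocks adds $p_{m_j}$ vertices; together with the $\ge q-p_{m_j}$ vertices gained at $m_j$ in step one, the total gain is at least $q$. Step two, second case: otherwise every chain starts at a $B$-full block; if $m^*$ is the smallest index among these start blocks then no chain uses a block of index $<m^*$, and since $p_{m^*}\ge q$ there is an incoming path of $T$ into $u_{m^*}$ of weight at least $q$, all of whose blocks have index $<m^*$; prepending it (realized by longest paths) to the chain starting at $m^*$ adds at least $q$ vertices. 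In every case $\lambda_k(Q)\ge|V(W')|\ge\lambda_k(B)+q$, as required.

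I expect the main difficulty to be precisely this construction of $W'$: having enlarged the parts of $W$ that live inside split blocks, one must still append $q$ more vertices to one of the $k$ paths without spawning a new path, and arranging for this is exactly what forces the three-way classification of blocks together with the remark that optimal incoming paths into split (or $A$-full) blocks stay within the $A$-full region. The accompanying bookkeeping — that enlarging the piece inside a block and prepending a path each reconnect into a genuine $k$-path subdigraph of $Q$, which is valid because any two consecutive blocks of a composition are joined by all arcs between them — is routine but must be carried out with some care.
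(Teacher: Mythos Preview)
Your proof is correct and follows essentially the same approach as the paper: the same three-way classification of blocks via $p_i^{in}$ and $p_i^{end}$ (your $p_i$ and $h(u_i)$), the same application of BNY to split blocks, and the same strategy of extending a maximum $k$-path subdigraph of $B$ by replacing pieces inside a split block and prepending an incoming path. The only cosmetic differences are that the paper selects the block $H_l$ with minimum $p^{in}$ among start blocks and proves disjointness via a small lemma ($p_i^{end}\le p_j^{in}$ whenever there is a path from $H_i$ to $H_j$), whereas you split the two cases up front and argue disjointness via optimality of prefixes (case~1) and the topological ordering (case~2); both routes yield the same bound.
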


Note that a single vertex satisfies the property BNY and it follows from Theorem~\ref{T5} that every acyclic digraph satisfies property BNY.

We also consider semicomplete compositions. A digraph $D$ is {\em semicomplete} if for every pair $x,y$ of distinct vertices of $D$, there is at least one arc between $x$ and $y.$ In particular, a tournament is a semicomplete digraph, where there is exactly one arc between $x,y$ for every pair $x,y$ of distinct vertices. A composition $Q=T[H_1,\ldots,H_t]$ is called \emph{semicomplete} if $T$ is a semicomplete digraph.  We show (a strengthening of) the following theorem in
Section \ref{S4}.

\begin{thm}\label{BNYsemicomplete}
Let $Q=T[H_1,\dots,H_t]$ be a semicomplete composition.
If  for every $i\in [t]$,  $H_i$ satisfies property BNY, then $Q$ satisfies property BNY.
\end{thm}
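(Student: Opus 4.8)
The plan is to argue by induction on $|V(Q)|$, in the spirit of the acyclic case (Theorem~\ref{T5}) but dealing with the new difficulties created by cycles in $T$. The base case $|V(Q)|=1$ is immediate. For the inductive step I would first reduce to the case that $T$ is strongly connected: the strong component digraph of a semicomplete digraph is a transitive tournament, so if $T$ is not strong then $Q=T'[Q_1,\dots,Q_s]$ with $s\ge 2$, where $T'$ is a transitive tournament (hence acyclic) and each $Q_j$ is a semicomplete composition on strictly fewer vertices whose quotient is a strong component of $T$ and whose blobs lie among $H_1,\dots,H_t$; the inductive hypothesis gives property BNY for each $Q_j$, and Theorem~\ref{T5} then gives it for $Q$. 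So assume $T$ is strong; if $t=1$ then $Q=H_1$ and we are done, so assume $t\ge 2$, in which case $T$ contains a Hamiltonian cycle.

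The next step is a formula for $\lambda_k(Q)$. A $k$-path subdigraph of $Q$ projects onto $k$ walks in $T$; if $u_i$ is visited $c_i$ times in total, the subdigraph meets $H_i$ in a $c_i$-path subdigraph and so has order at most $\sum_{i=1}^{t}\lambda_{c_i}(H_i)$, while conversely the admissible visit-count vectors can be realised using the Hamiltonian cycle of $T$ together with trivial paths. I would record this as a lemma of the form
\[
\lambda_k(Q)=\max\Big\{\textstyle\sum_{i=1}^{t}\lambda_{c_i}(H_i)\ :\ (c_1,\dots,c_t)\in\mathcal C_k\Big\},
\]
where $\mathcal C_k\subseteq\mathbb Z_{\ge 0}^{\,t}$ depends only on $T$ and on the orders $n_1,\dots,n_t$, is nondecreasing in $k$, and governs $\lambda_k$ of every sub-composition $T[B_1,\dots,B_t]$ with $B_i$ an induced subdigraph of $H_i$, through a subfamily $\mathcal C_k^{B}\subseteq\mathcal C_k$.

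Now fix $q$ with $1\le q\le\lambda(Q)-1$. If $q\le\sum_i\lambda(H_i)$, split $q=\sum_i q_i$ with $0\le q_i\le\lambda(H_i)$ and, for each $i$, pick a partition $(A_i,B_i)$ of $H_i$ with $\lambda_c(B_i)\le\lambda_c(H_i)-q_i$ for all $c$: property BNY of $H_i$ supplies this when $1\le q_i\le\lambda(H_i)-1$, while $q_i=0$ gives $A_i=\emptyset$ and $q_i=\lambda(H_i)$ gives $B_i=\emptyset$. With $A=T[A_1,\dots,A_t]$ and $B=T[B_1,\dots,B_t]$, the formula above yields, for every $k$,
\[
\lambda_k(B)\le\max_{(c_i)\in\mathcal C_k}\sum_i\lambda_{c_i}(B_i)\ \le\ \max_{(c_i)\in\mathcal C_k}\Big(\sum_i\lambda_{c_i}(H_i)-\sum_i q_i\Big)=\lambda_k(Q)-q ,
\]
so it remains only to verify $\lambda(A)\le q$; the opposite regime $q>\sum_i\lambda(H_i)$ (which can occur, since $\lambda(Q)$ may exceed $\sum_i\lambda(H_i)$) will need a separate construction.

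This is the crux. Since $T$ is strongly connected, a single path of $A=T[A_1,\dots,A_t]$ may re-enter a blob repeatedly, so $\lambda(A)=\max_{(c_i)\in\mathcal C_1^{A}}\sum_i\lambda_{c_i}(A_i)$ picks up the values $\lambda_{c_i}(A_i)$ with $c_i\ge 2$, which property BNY of $H_i$ does not bound — it controls only $\lambda_1(A_i)=\lambda(A_i)\le q_i$. The intended fix is to make the $A_i$ as small as possible: one wants to arrange $|V(A_i)|=q_i$, which forces $\lambda_{c_i}(A_i)\le q_i$ for \emph{all} $c_i$ and hence $\lambda(A)\le\sum_i q_i=q$. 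In the regime $q>\sum_i\lambda(H_i)$ there is genuine interaction between blobs through $T$: $A$ may have to swallow whole blobs of small $\lambda$-value, making $|V(A)|$ much larger than $q$, and one must instead carve $V(A)$ out of a maximum $k$-path subdigraph of $Q$ in a globally coordinated way, keeping $\lambda_k(B)$ small for all $k$ simultaneously. Making this coordination work — presumably the reason a strengthened form of the theorem is established in Section~\ref{S4} — is the technical heart of the argument and the step I expect to be the main obstacle.
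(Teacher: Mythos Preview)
Your outline correctly locates the obstacle but does not overcome it; the two places where it stalls are exactly where the paper supplies the missing ideas.

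First, your formula $\lambda_k(Q)=\max_{(c_i)\in\mathcal C_k}\sum_i\lambda_{c_i}(H_i)$ is too soft. You give no description of $\mathcal C_k$, and ``realised using the Hamiltonian cycle of $T$ together with trivial paths'' is not enough to pin it down (for instance, visit counts interact with the $n_i$ in a nontrivial way). The paper replaces this entirely by a structural result (Theorem~\ref{T4}, built on Lemma~\ref{l1} for extended semicomplete digraphs): there exist integers $l_{i,k}$ such that \emph{every} maximum $k$-path subdigraph of $Q$ covers exactly $l_{i,k}$ vertices of $H_i$, whence $\lambda_k(Q)=\sum_i l_{i,k}$. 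This rigidity, not a maximisation over a vague family $\mathcal C_k$, is what drives the rest of the argument; in particular, since $l_{i,k}\ge l_{i,1}=:l_i$ and $\sum_i l_i=\lambda(Q)$, your worrying regime $q>\sum_i\lambda(H_i)$ never needs special handling because one works with the $l_i$ rather than the $\lambda(H_i)$.

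Second, your attempt to control $\lambda(A)$ by forcing $|V(A_i)|=q_i$ cannot be carried out, as you suspect: property BNY gives no bound on $|V(A_i)|$. The paper's resolution is a short but decisive observation you are missing. Choose indices $i_1,\dots,i_s$ so that $\sum_{r}l_{i_r}\le q<\sum_r l_{i_r}+l_m$, always taking $l_m=\max_i l_i$, and arrange (using Case~2 otherwise) that $l_j<|H_j|$ for each $j\in\{i_1,\dots,i_s,m\}$. Then the subdigraph of $T$ induced by $\{u_{i_1},\dots,u_{i_s},u_m\}$ is \emph{acyclic}: a directed cycle among these vertices would let one extend any longest path of $Q$ along the cycle, since each participating $H_j$ has an unused vertex. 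Consequently, with $A=Q[V(H_{i_1})\cup\cdots\cup V(H_{i_s})\cup V(A_m)]$, every path of $A$ enters each blob at most once, so $\lambda(A)\le\sum_r\lambda(H_{i_r})+\lambda(A_m)\le\sum_r l_{i_r}+q'=q$. This dissolves precisely the re-entry problem you flagged, and it shows (Theorem~\ref{T6}) that only the single $H_m$ ever needs a BNY split, which is the strengthening you alluded to. The $B$-side then follows from $\lambda_k(Q)=\sum_i l_{i,k}$ together with the BNY bound on $B_m$, with one extra wrinkle: the number of times a $k$-path subdigraph of $B$ can enter $B_m$ is at most $v_{m,k}$ (the extended-semicomplete count), so one bounds $\lambda_{u_k}(B_m)$ for $u_k=\min\{v_{m,k},|V(B_m)|\}$ rather than $\lambda_k(B_m)$.

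In short, the missing ingredients are (i) the exact-cover theorem $\lambda_k(Q)=\sum_i l_{i,k}$, and (ii) the acyclicity of $T$ on the indices with $l_j<|H_j|$; together they remove both obstacles you identified and avoid the induction and the visit-count formalism altogether.
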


\section{Acyclic Compositions}\label{S3}

 We restate Theorem \ref{T5} for the convenience of the reader. Note that the main idea of the proof of Theorem \ref{T5} is similar to that of the main acyclic composition result of \cite{AG}.

\medskip

\noindent{\bf Theorem \ref{T5}}
{\em Let $Q=T[H_1,\dots,H_t]$ be an acyclic composition.
If  for every $i\in [t]$,  $H_i$ satisfies property BNY, then $Q$ satisfies property BNY.
}

\begin{proof}

For $i=1,\ldots,t$, define $p_i^{in}$ as the maximum order of a path in $Q-H_i$ whose terminal vertex has an arc into $H_i$, and $p_i^{end}$ as the maximum order of a path in $Q$ such that the terminal vertex of this path belongs to $H_i$. Note that $T$ is acyclic and thus no path  can enter and leave $H_i$ more than once and hence $p_i^{end}=p_i^{in}+\lambda(H_i)$ and in particular $p_i^{end}>p_i^{in}$.

We partition $Q$ as follows
\begin{enumerate}
		\item If $p_i^{end}\leq q$, then the entire graph $H_i$ belongs to $A$.
		\item If $p_i^{in}\geq q$, then the entire graph $H_i$  belongs to  $B$.
		\item If $p_i^{in}<q<p_i^{end}$, we let $q_i=q-p^{in}_i$ and  partition $H_i$ into $(A_i,B_i)$ such that for all $k=1,\ldots, V(B_i)$, we have $\lambda(A_i)\leq q_i$ and $\lambda_k(B_i)\leq \lambda_k(H_i)-q_i$. The digraph  $A_i$ belongs to $A$ and  the digraph  $B_i$ belongs to $B$.
	\end{enumerate}

We first show that $\lambda(A)\leq q$. Consider a longest path $P$  in $A$ and let $x\in V(H_i)$ be the last vertex of this path. If $p_i^{end} \leq q$ then by definition of $p_i^{end}$ and the fact that $A$ is a subgraph of $Q$, the longest path ending in $x$ has order at most $q$ and thus $P$ has order at most $q$.  If $p_i^{end} >q$ then $p_i^{in}<q$ and the longest path in the intersection of $A$ and $H_i$ has order at most $q-p_i^{in}$. As $T$ is acyclic $P$ can enter $H_i$ at most once and thus $P$ has order at most $p_i^{in}+q-p_i^{in}=q$.

Now we show that for $k=1,\ldots, |V(B)|$,   $\lambda_k(B)\leq \lambda_k(Q)-q$.
Let $W_k=P_1\cup P_2\cdots\cup P_k$ be a $k$-path subdigraph with maximum order in $B$, and let $x_1$, $x_2$, $\dots$, $x_k$ be the first vertices of $P_1$, $P_2$, $\dots$, $P_k$, respectively. Without loss of generality, we assume that $x_1$ is in the graph $H_l$ that has the minimum $p^{in}$-value among all the graphs with starting vertices in $W_k$. Suppose that $P'$ is a  path of order $p_l^{in}$ in $Q-H_l$, such that the terminal vertex  of $P'$ has an arc into $H_l$. We will show that $P'$ does not intersect $W_k$. In fact we claim something  stronger namely that  for every path  from $x\in V(H_i)$ to $y\in V(H_j)$, either $i=j$ or $p_i^{end} \leq  p_j^{in}$. Since there is a path from every vertex in $P'$ to $x_1$ and $P'$ avoids $H_l$, it follows that the $p^{in}$-values of all their host graphs $H_i$ must be smaller than $p_l^{in}$.

To prove our claim assume that $i\not=j$ and consider a longest path $P^*$ of length $p_i^{end}$ with a terminal vertex in $H_i$. This path $P^*$ cannot use a vertex in $H_j$ as by the definition of compositions and the existence of a path from $x\in V(H_i)$ to $y\in V(H_j)$ there is a path from every vertex in $H_i$ to every vertex in $H_j$ and thus (a part of) $P^*$ together with a path from $H_i$ to $H_j$ would form a closed walk and therefore would induce a cycle in $T$. Hence we can extend $P^*$ to a path to $H_j$ and thus $p_i^{end}\leq p_j^{in}$.

 Now, if  $p_l^{in} \geq q$ then $|V(P')|=p_l^{in}\geq q$. We can obtain a new $k$-path subdigraph $W_k'\subseteq Q$ from $W_k$ by appending $P_1$ to $P'$. Observe that $\lambda_k(Q)\geq |W_k'|= |W_k|+p_l^{in}\geq \lambda_k(B)+q$.
	If $p_l^{in} < q$  then $p_l^{end} > q$. Assume that $W_k$ intersects $B_l$ in $b$ paths. By construction, we know $\lambda_b(B_l)\leq \lambda_b(H_l)-q_l$. We substitute the $b$ paths in $B_l$ by a maximum $b$-path subdigraph in $H_l$ and adding $P'$ to one of the paths. This gives
	\begin{align*}
	\lambda_k(Q)&\geq |W_k|+(\lambda_b(H_l)-\lambda_b(B_l))+|V(P)| \\
	& \geq \lambda_k(B)+q_l+p_l^{in}\\
	& =   \lambda_k(B)+q,
	\end{align*}
	which completes the proof.
\end{proof}

\section{Semicomplete Compositions}\label{S4}

Recall that composition $Q=T[H_1,\dots , H_t]$ is semicomplete if $T$ is a semicomplete digraph, that is, for  every pair $x,y$ of distinct vertices in $T$, there is at least one arc between $x$ and $y$.   In addition, if $H_1,\dots,H_t$ are digraphs without arcs, we call $Q$ an {\em extended semicomplete digraph} and denote it by $Q=T[E_{n_1},\dots, E_{n_t}]$ where $n_i=|V(H_i)|$ for $1\leq i\leq t.$

Bang-Jensen et al. \cite{BMA} proved the following somewhat surprising result.
\begin{lemma}\label{l1}\cite{BMA}
Let $Q'=T[E_{n_1},\dots, E_{n_t}]$ be an extended semicomplete digraph and let $v_{i,k}$ denote the maximum number of vertices of $E_{n_i}$ that can be covered by a $k$-path subdigraph in $Q'$. Then for each $i\in [t]$ every maximum $k$-path subdigraph in $Q'$ covers exactly $v_{i,k}$ vertices of $E_{n_i}.$
\end{lemma}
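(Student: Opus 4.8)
The plan is to first show that the lemma is equivalent to the single equality $\lambda_k(Q')=\sum_{i=1}^t v_{i,k}$, and then to prove this equality by an exchange argument exploiting that the vertices of each block $E_{n_i}$ are \emph{twins} in $Q'$: they share all in‑neighbours and all out‑neighbours, namely the vertices of the blocks $E_{n_p}$ with $u_pu_i\in A(T)$ (the ``in‑blocks'' of $u_i$) and with $u_iu_p\in A(T)$ (the ``out‑blocks''), and, $T$ being semicomplete, every block other than $E_{n_i}$ is an in‑block or an out‑block of $u_i$ (or both).

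For the reduction, write $c_i(\mathcal P)$ for the number of vertices of $E_{n_i}$ covered by a $k$-path subdigraph $\mathcal P$, so that $|V(\mathcal P)|=\sum_{i=1}^t c_i(\mathcal P)$ and $c_i(\mathcal P)\le v_{i,k}$ for every $i$. Hence $\lambda_k(Q')\le\sum_i v_{i,k}$; and if some $k$-path subdigraph attains $\sum_i v_{i,k}$ vertices then every maximum $k$-path subdigraph does, and for such a subdigraph the inequalities $c_i\le v_{i,k}$ must all be equalities, which is exactly the lemma. So it suffices to construct one $k$-path subdigraph that is simultaneously per‑block optimal. I would take a maximum $k$-path subdigraph $\mathcal P$ that is lexicographically largest in $(c_1(\mathcal P),\dots,c_t(\mathcal P))$ and argue that $c_i(\mathcal P)=v_{i,k}$ for all $i$. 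If not, let $j$ be smallest with $c_j(\mathcal P)<v_{j,k}$, let $\mathcal Q$ be a $k$-path subdigraph with $c_j(\mathcal Q)=v_{j,k}$, and, using twins of $E_{n_j}$ (relabel, and re-choose $\mathcal Q$ to maximise $|V(\mathcal P)\cap V(\mathcal Q)\cap E_{n_j}|$), assume $V(\mathcal P)\cap E_{n_j}\subsetneq V(\mathcal Q)\cap E_{n_j}$; fix $w\in(V(\mathcal Q)\setminus V(\mathcal P))\cap E_{n_j}$. Since $\mathcal P$ is maximum, $w$ cannot be prepended to, appended to, or inserted into a path of $\mathcal P$ (each operation would add a vertex), so no path of $\mathcal P$ starts in an out‑block or ends in an in‑block of $u_j$, and no path of $\mathcal P$ uses an arc from an in‑block to an out‑block of $u_j$. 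I would then superpose the arc sets of $\mathcal P$ and $\mathcal Q$ (in which every vertex has in‑ and out‑degree at most $2$), follow from $w$ the alternating trail that leaves $w$ along its $\mathcal Q$-arc, then a $\mathcal P$-arc, and so on, and swap $\mathcal P$-arcs with $\mathcal Q$-arcs along it; this should turn $\mathcal P$ into a $k$-path subdigraph $\mathcal P'$ that covers $w$ (so $c_j(\mathcal P')>c_j(\mathcal P)$), keeps the total unchanged, and does not decrease $c_1,\dots,c_{j-1}$ — hence keeps them at their maximal values — contradicting the lexicographic choice of $\mathcal P$. Semicompleteness of $T$ enters precisely here: whenever the naive swap would create an extra component or a directed cycle, a ``shortcut'' arc between the two blocks involved is available to repair it.

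The step I expect to be the main obstacle is controlling the terminus of the alternating trail so that the swap yields exactly $k$ paths with no directed cycle and leaves $c_1,\dots,c_{j-1}$ untouched; this needs a case analysis on how $w$ sits on its $\mathcal Q$-path (interior, initial, terminal) and on where the alternating trail ends (closing into a cycle, or at an end of a path of $\mathcal P$ or of $\mathcal Q$), verifying in each case that a valid re-routing with the required monotonicity exists. That the insertion of $w$ genuinely cannot always be done ``locally'' is already visible in $T_3[E_{n_1},E_{n_2},E_{n_3}]$ with $n_2=n_3=1$, where $v_{1,k}$ is strictly smaller than $n_1$: the augmentation must route $w$ through the structure witnessed by $\mathcal Q$ rather than simply splice it into $\mathcal P$. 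An alternative organisation of the proof is induction on the number of strong components of $T$, peeling off the terminal strong component of the condensation of $T$ — itself a semicomplete composition — and reducing to the strongly connected base case, with the same twin‑based exchange driving the inductive step.
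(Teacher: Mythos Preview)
The paper does not prove this lemma at all: it is quoted verbatim from Bang-Jensen, Nielsen and Yeo \cite{BMA} and used as a black box in the proof of Theorem~\ref{T4}. So there is no ``paper's own proof'' to compare against; what you have written is an attempt to reprove a result the authors import from the literature.

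On the substance of your sketch: the reduction in the first paragraph is clean and correct --- the lemma is indeed equivalent to the single identity $\lambda_k(Q')=\sum_i v_{i,k}$, and your lexicographic extremal choice is a reasonable way to organise an inductive contradiction. The local observations you draw from maximality of $\mathcal P$ (no path of $\mathcal P$ starts in an out-block of $u_j$, none ends in an in-block, no arc of $\mathcal P$ jumps from an in-block straight to an out-block) are also correct.

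The genuine gap is exactly where you flag it. The ``alternating trail plus swap'' heuristic is borrowed from matching theory, but for systems of \emph{directed} paths the analogue is far less robust: following out-arcs alternately from $\mathcal Q$ and $\mathcal P$ does not give a structure whose symmetric difference with $\mathcal P$ is again a union of $k$ vertex-disjoint directed paths. Swapping along such a trail can create directed cycles, change the number of components, or --- crucially for your lexicographic argument --- decrease some $c_i$ with $i<j$, since the trail may well leave block $j$ and pass through earlier blocks. Your appeal to semicompleteness (``a shortcut arc is available to repair it'') is a hope, not an argument: you would need to specify, case by case, which arc is taken, verify that the repaired object is still a $k$-path subdigraph of the same order, and check that $c_1,\dots,c_{j-1}$ are preserved. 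None of that is done, and the case analysis you outline (interior/initial/terminal position of $w$, plus the several ways the trail can terminate) is precisely the content of a proof, not a detail to be filled in later. As it stands, this is a plausible plan with the hard step missing; if you want a self-contained argument you should consult the original proof in \cite{BMA}, which relies on structural results for extended semicomplete digraphs rather than on an ad hoc exchange.
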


Note that in the previous lemma $v_{i,k}$ is defined as the maximum number of vertices in $E_{n_j}$ in  \emph{any} $k$-path subdigraph  not only the maximal ones. In particular this implies that every semicomplete graph has a  Hamilton path, which is a much easier statement of course.
With Lemma~\ref{l1} we can prove the following theorem which is a generalisation of a result in \cite{BMA} and Sun \cite{Y}. Our proof is similar to that in  \cite{Y}.

\begin{thm}\label{T4}
Let $Q=T[H_1,\dots,H_t]$ be a semicomplete composition and let $l_{i,k}$ denote the maximum number of vertices of $H_i$ that can be covered by a $k$-path subdigraph in $Q$. Then for each $i\in [t]$ every maximum $k$-path subdigraph in $Q$ covers exactly $l_{i,k}$ vertices of $H_i$.
\end{thm}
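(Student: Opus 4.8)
The plan is to reduce the statement about the semicomplete composition $Q=T[H_1,\dots,H_t]$ to the already-established statement (Lemma~\ref{l1}) about an associated extended semicomplete digraph. The key observation is that inside a single $H_i$, a $k$-path subdigraph of $Q$ restricted to $H_i$ only sees the arcs of $H_i$, and that the ``external'' structure of $Q$ between distinct blocks $H_i$ and $H_j$ is governed entirely by $T$. So I would first set up a correspondence: given a $k$-path subdigraph $W$ of $Q$ that covers $m_i$ vertices of $H_i$ for each $i$, I want to relate it to a path-factor-type object in $T[E_{m_1},\dots,E_{m_t}]$. The natural bridge is the number of paths (or path-segments) that $W$ uses inside each $H_i$: if $W$ enters and leaves $H_i$ using some set of segments, replacing the portion of $W$ inside $H_i$ by a maximum $b_i$-path subdigraph of $H_i$ on those $b_i$ segments can only increase the number of covered vertices of $H_i$ while keeping the total number $k$ of paths fixed and not decreasing coverage elsewhere. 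This is the same local-surgery idea used in the proof of Theorem~\ref{T5}.

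Next, I would handle the two inequalities separately, as is standard for ``every maximum object attains the extremal value'' statements. Let $W$ be a maximum $k$-path subdigraph of $Q$, say of order $\lambda_k(Q)$, covering $c_i$ vertices of $H_i$; I must show $c_i=l_{i,k}$ for every $i$. One direction, $c_i\le l_{i,k}$, is immediate from the definition of $l_{i,k}$. For the reverse, suppose for contradiction that $c_{i_0}<l_{i_0,k}$ for some $i_0$. Take a (not necessarily maximum-order) $k$-path subdigraph $W'$ witnessing $l_{i_0,k}$, i.e. covering $l_{i_0,k}$ vertices of $H_{i_0}$. Now I want to ``merge the good features'' of $W$ and $W'$: build a new $k$-path subdigraph that covers $\ge \lambda_k(Q)$ vertices in total but strictly more than $c_{i_0}$ in $H_{i_0}$ — or, more cleanly, derive from $W$ and $W'$ a single object in an extended semicomplete digraph where Lemma~\ref{l1} applies, and read off the contradiction there.

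Concretely, the cleanest route is: associate to $Q$ the extended semicomplete digraph $Q^* = T[E_{n_1},\dots,E_{n_t}]$ on the same vertex set (same $T$, same block sizes, but blocks made arc-free). From any $k$-path subdigraph $W$ of $Q$ covering $c_i$ vertices of $H_i$ and using $b_i$ segments inside $H_i$, I extract a $k$-path subdigraph $\widehat W$ of $Q^*$ covering exactly $c_i$ vertices of $E_{n_i}$: keep the inter-block arcs of $W$ verbatim, and within each $H_i$ rewire the $b_i$ covered segments into $b_i$ arc-free segments on the same vertices (legal in $Q^*$ since $E_{n_i}$ has no arcs, and any interleaving with external arcs is preserved because all external arcs between two blocks are present in $Q^*$ exactly as in $Q$). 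Thus the coverage vector of a maximum $k$-path subdigraph of $Q$ gives a valid coverage vector in $Q^*$. Conversely, I would argue that the maximum total coverage $\lambda_k(Q)$ equals $\lambda_k(Q^*)$: any $k$-path subdigraph of $Q^*$ can be lifted to one of $Q$ of the same order by replacing its arc-free segments inside each $E_{n_i}$ by arbitrary Hamilton paths of the corresponding induced subgraph of $H_i$ on those vertices (these exist because they are just paths on a specified vertex set — wait, one must instead only claim we can connect them in some order, so here I would be slightly more careful and route the segments so that the lift stays a union of $k$ paths, which works since inside a block we are free to order the chosen vertices however the original $Q^*$-segment dictates and the induced subdigraph of $H_i$ need not contain that path). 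This last point is the one subtlety; the honest fix is to phrase the reduction purely in terms of \emph{coverage numbers} $l_{i,k}$ and $v_{i,k}$: show $l_{i,k}\le v_{i,k}$ trivially and then, using the block-surgery, that a maximum $k$-path subdigraph of $Q$ induces a maximum $k$-path subdigraph of $Q^*$, so by Lemma~\ref{l1} it covers exactly $v_{i,k}$ vertices of $E_{n_i}$; finally deduce $l_{i,k}=v_{i,k}$ and hence every maximum $k$-path subdigraph of $Q$ covers exactly $l_{i,k}$ vertices of $H_i$.

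The main obstacle is exactly the interaction between the within-block arcs of the $H_i$ and the requirement that the reconstructed object be a union of $k$ \emph{vertex-disjoint directed} paths: when I replace a segment of $W$ inside $H_i$ by a longer one using $H_i$'s arcs, I must make sure the endpoints still match up with the external arcs entering/leaving that segment, and that I do not accidentally merge or split paths and change $k$. Establishing the matching ``$\lambda_k(Q)$ is attained while fixing the number of segments per block'' — i.e. that one may assume a maximum $k$-path subdigraph uses, within each $H_i$, a maximum $b_i$-path subdigraph of $H_i$ — is the technical heart, and it is precisely the local exchange argument already deployed in the proof of Theorem~\ref{T5}; I would lift that argument essentially verbatim.
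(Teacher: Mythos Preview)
Your overall strategy --- pass to the associated extended semicomplete digraph $Q^{*}=T[E_{n_1},\dots,E_{n_t}]$ and invoke Lemma~\ref{l1} --- is exactly what the paper does. But the reduction you describe is miscalibrated in a way that breaks the argument. When you take a $k$-path subdigraph $W$ of $Q$ that uses $b_i$ segments inside $H_i$ covering $c_i$ vertices, you cannot ``rewire the $b_i$ covered segments into $b_i$ arc-free segments on the same vertices'' in $Q^{*}$: in $E_{n_i}$ there are no arcs, so an ``arc-free segment'' is a single vertex. Dropping the intra-block arcs of $W$ therefore produces far more than $k$ paths, not a $k$-path subdigraph of $Q^{*}$ covering $c_i$ vertices. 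The correct passage from $Q$ to $Q^{*}$ is to \emph{contract} each segment to a single vertex, which yields a $k$-path subdigraph of $Q^{*}$ covering $b_i$ (not $c_i$) vertices of $E_{n_i}$; this is how one gets $b_i\le v_{i,k}$ and hence $c_i\le \lambda_{b_i}(H_i)\le \lambda_{v_{i,k}}(H_i)$.

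Consequently your ``honest fix'' contains two false assertions: the inequality $l_{i,k}\le v_{i,k}$ is \emph{not} trivial and is in general false (think of a block $H_i$ with a long Hamilton path but with $v_{i,1}=1$; a single path in $Q$ can cover all of $H_i$), and $\lambda_k(Q)=\lambda_k(Q^{*})$ is likewise false. The right identity is $l_{i,k}=\lambda_{v_{i,k}}(H_i)$: the paper obtains the upper bound via the contraction above, and the matching lower bound by starting from a maximum $k$-path subdigraph of $Q^{*}$ (which by Lemma~\ref{l1} hits $E_{n_i}$ in exactly $v_{i,k}$ vertices) and \emph{expanding} those $v_{i,k}$ vertices into the paths of a maximum $v_{i,k}$-path subdigraph of $H_i$. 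This expansion is legal precisely because all vertices of $H_i$ have identical external neighbourhoods, so endpoints match up automatically --- there is no delicate ``endpoint matching'' issue of the kind you worry about at the end. Once $l_{i,k}=\lambda_{v_{i,k}}(H_i)$ for every $i$, the theorem follows by the pigeonhole: any maximum $k$-path subdigraph has order $\sum_i l_{i,k}$ and covers at most $l_{i,k}$ in each $H_i$, hence exactly $l_{i,k}$.
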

\begin{proof}
Let $Q'=T[E_{n_1},\dots, E_{n_t}]$ be the extended semicomplete digraph obtained by removing all arcs in $H_i$ for $1\leq i\leq t$ and define $v_{i,k}$ as in Lemma \ref{l1}.
According to Lemma \ref{l1}, every maximum $k$-path subdigraph $L$ in $Q'$ contains exactly $v_{i,k}$ vertices from $E_{n_i}.$ For each $i=1,\dots,t$ consider a (maximum) $v_{i,k}$-path subdigraph $L_i.$ Now  for each $i\in [t]$, substitute the  $v_{i,k}$ vertices of $E_{n_i}$ in $L$ by a path from $L_i$ to obtain a $k$-path subdigraph in $Q$ containing $|L_i|$ vertices in $H_i$. Note that this operation is possible as each vertex in $H_i$ has the same in- and out-neighbours outside $H_i.$
 Thus $l_{i,k}\geq |L_i|.$

We will now show that $l_{i,k}\leq |L_i|$ and thus $l_{i,k}=|L_i|.$ Assume for a contradiction that there exists a $k$-path subdigraph $P_k$ in $Q$ that covers in, say $H_1$, more than $|L_1|$ vertices. Note that $P_k$ can enter or visit $H_1$ at most $v_{1,k}$ times because $P_k$ translates to a $k$-path subdigraph in $Q'$ (for example by replacing each path in $H_1$ by the first vertex of the corresponding path in $E_{n_1}$.) Thus the graph induced by $P_k$ in $H_1$ is a $b$-paths subdigraph of $H_1$ with $b\leq v_{1,k}$ and thus this graph has no more than $|L_1|$ vertices.

As $|L_i|=l_{i,k}$ no $k$-path subdigraph in $Q$ can have more than $|L_1|+\ldots+ |L_t|$ vertices and as we have constructed a (maximum) path with $|L_1|+\ldots+ |L_t|$ vertices, it follows that in fact every maximum  $k$-path subdigraph covers exactly $l_{i,k}$ vertices in $H_i$.
\end{proof}

Now we can prove the following theorem, which  is a strengthening of Theorem~\ref{BNYsemicomplete}.

\begin{thm}\label{T6}
Let $Q=T[H_1,\dots,H_t]$ be a semicomplete composition. Let $l_i$ denote the maximum number of vertices of $H_i$ that can be covered by a longest path in $Q$, and let $l_m=\max \{l_i\mid i\in [t]\}.$
If $H_m$ satisfies BNY, then $Q$ satisfies BNY as well.
\end{thm}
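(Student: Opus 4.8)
The plan is to reduce the BNY property for $Q$ to the BNY property for $H_m$, using Theorem~\ref{T4} to control how $k$-path subdigraphs of $Q$ distribute their vertices among the $H_i$. Fix $q$ with $1\le q\le\lambda(Q)-1$. The key observation is that, by Theorem~\ref{T4}, for every $k$ there is a well-defined value $l_{i,k}$ equal to the number of vertices of $H_i$ covered by \emph{any} maximum $k$-path subdigraph of $Q$; in particular $\lambda_k(Q)=\sum_{i=1}^t l_{i,k}$, and $l_i=l_{i,1}$. First I would use the BNY property of $H_m$: since $l_m=\lambda(Q)-\sum_{i\ne m} l_i$ is the length of a longest path of $Q$ restricted to $H_m$, and since one can show $1\le q-\big(\lambda(Q)-l_m\big)\le l_m-1$ in the nontrivial range (the boundary cases $q\le\lambda(Q)-l_m$ and $q\ge\lambda(Q)-\sum_{i\ne m}l_i+\dots$ being handled directly by putting whole $H_i$'s on one side), pick $q_m:=q-(\lambda(Q)-l_m)$ and apply BNY to $H_m$ to get a partition $(A_m,B_m)$ of $H_m$ with $\lambda(A_m)\le q_m$ and $\lambda_k(B_m)\le\lambda_k(H_m)-q_m$ for all $k$.

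Next I would build the partition $(A,B)$ of $Q$: put $A_m$ and all of the $H_i$ that ``must'' lie on the $A$-side into $A$, put $B_m$ and the remaining $H_i$ into $B$. The guiding principle for which whole $H_i$ goes where is exactly the acyclic-composition idea adapted to the semicomplete setting via Theorem~\ref{T4}: since $H_m$ is where a longest path of $Q$ spends the most vertices, we can order the contributions so that the ``prefix'' of a longest path of $Q$ before it enters $H_m$ has a controlled length $\lambda(Q)-l_m$, and everything contributing to that prefix goes to $A$, everything after goes to $B$. Then $\lambda(A)\le(\lambda(Q)-l_m)+q_m=q$: a longest path in $A$ can be analysed by Theorem~\ref{T4} applied to $Q[A]$ together with the bound $\lambda(A\cap H_m)\le\lambda(A_m)\le q_m$ and the fact that the $A$-parts outside $H_m$ contribute at most $\lambda(Q)-l_m$ jointly.

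For the lower-bound side, I would show $\lambda_k(B)\le\lambda_k(Q)-q$ for every $k\le|V(B)|$ by a swapping argument: take a maximum $k$-path subdigraph $W$ of $B$; by Theorem~\ref{T4} the number of its paths meeting $H_m$, say $b$, satisfies $\lambda_b(B_m)\le\lambda_b(H_m)-q_m$, so we may replace $W\cap B_m$ by a maximum $b$-path subdigraph of $H_m$ inside $Q$, gaining at least $q_m$ vertices, and then prepend a longest ``$A$-prefix'' path (which has $\lambda(Q)-l_m$ vertices and, by the semicomplete structure, is disjoint from and attachable in front of $W$) to one of the resulting paths, gaining another $\lambda(Q)-l_m$ vertices; this yields a $k$-path subdigraph of $Q$ with at least $\lambda_k(B)+q_m+(\lambda(Q)-l_m)=\lambda_k(B)+q$ vertices, hence $\lambda_k(Q)\ge\lambda_k(B)+q$.

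The main obstacle I anticipate is making precise the claim that the $A$-side contributions outside $H_m$ can be arranged to have total ``prefix length'' exactly $\lambda(Q)-l_m$, and that the corresponding prefix path is genuinely vertex-disjoint from, and composable in front of, an arbitrary $k$-path subdigraph living on the $B$-side. In the acyclic case this followed from acyclicity of $T$ forcing a path to enter each $H_i$ at most once; here one instead needs Theorem~\ref{T4} plus the fact that in a semicomplete $T$ the vertices can be linearly preordered so that the ``$H_m$-or-before'' part and the ``after-$H_m$'' part interact in only one direction along longest structures. Getting the bookkeeping of $l_{i,k}$ versus $l_i$ consistent across all $k$ simultaneously — so that a single partition $(A,B)$ works for every $k$ — is where the real work lies; the rest is the same prepend-and-swap accounting used in the proof of Theorem~\ref{T5}.
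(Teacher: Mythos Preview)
Your proposal has a genuine gap in the bound $\lambda(A)\le q$. You set $q_m=q-(\lambda(Q)-l_m)$, place all $H_i$ with $i\ne m$ together with $A_m$ into $A$, and argue that a longest path $P$ in $A$ picks up at most $\sum_{i\ne m}l_i$ vertices outside $H_m$ and at most $\lambda(A_m)\le q_m$ vertices inside $A_m$. The first estimate is fine (any path in $Q$ meets $H_i$ in at most $l_i$ vertices, by the definition of $l_i$). The second fails: since $T$ is only semicomplete, $P$ can enter and leave $A_m$ several times, so $P\cap A_m$ is a $b$-path subdigraph of $A_m$ with $b$ possibly larger than $1$, and BNY gives you no control over $\lambda_b(A_m)$ for $b>1$. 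The ``linear preorder'' of $T$ you invoke does not exist in general, and Theorem~\ref{T4} says nothing about how non-maximum paths of $Q$ traverse the $H_i$.

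The paper fixes exactly this point, and in doing so also disposes of the small-$q$ range you wave off as a boundary case. Rather than putting \emph{all} $H_i$ with $i\ne m$ into $A$, it selects indices $i_1,\dots,i_s$ with $\sum_j l_{i_j}<q<\sum_j l_{i_j}+l_m$; whenever the next index $h$ to be absorbed satisfies $l_h=|H_h|$ one can chop $H_h$ arbitrarily (Case~2), so in the only nontrivial case every chosen index $j$, including $m$, satisfies $l_j<|H_j|$. The key lemma you are missing is that the subdigraph of $T$ induced by such ``shortfall'' indices is \emph{acyclic}: if it contained a cycle, a longest path of $Q$ could be extended around that cycle through an unused vertex of some $H_j$ (unused because $l_j<|H_j|$), contradicting maximality. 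Acyclicity forces every path in $A$ to visit each $H_{i_j}$ and $A_m$ at most once, after which the intended inequality $\lambda(A)\le\sum_j l_{i_j}+\lambda(A_m)\le q$ is immediate. The $\lambda_k(B)$ side is then handled purely by the additive identity $\lambda_k(Q)=\sum_i l_{i,k}$ from Theorem~\ref{T4} together with $l_{i,k}\ge l_i$, not by the prepend-and-swap construction you sketch.
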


\begin{proof}
Define $l_{i,k}$ as in Theorem \ref{T4} as the maximum number of vertices of $H_i$ that can be covered by a $k$-path subdigraph in $Q$. Note that $l_i=l_{i,1}$ and since every $k$-paths subigraph of $H_i$ is also a $k$-path subdigraph in $Q$ we have

\begin{equation}  \lambda_k(H_i)\leq l_{i,k}. \label{e1}\end{equation}
Moreover by Theorem \ref{T4}, we have
\begin{equation}
         \lambda_k(Q)= l_{1,k}+l_{2,k}+\dots+l_{t,k}. \label{q1}
\end{equation}

In particular,  $\lambda(Q)= l_1+\ldots+ l_t$.
We distinguish between 3 cases depending on $q$ and $l_i,\ldots, l_t$. The first case is not strictly necessary but it shows the approach without any technical difficulties. The third case  is the most interesting and there we need that  $H_m$ satisfies BNY.

\noindent{\bf Case 1:}  $q=l_{i_1}+ \ldots + l_{i_s}$ for some indices $i_1,\ldots,i_s \in \{1,\ldots,t\}$.
Then $A$ consists of the graph induced by the vertex set of  $H_{i_1}, H_{i_2},\ldots, H_{i_s}$ and $B$ of the graph induced by the remaining vertices.  Note that any path in $A$ is also a path in $Q$ and thus at most $l_{i_j}$ vertices of $H_{i_j}$ can belong to any path in $A$ and particularly to a longest path. Thus
\[
\lambda(A)\leq l_{i_1}+\dots+l_{i_s} = q \]
and similarly
\[
\lambda_k(B) \leq \sum_{j \not\in \{i_1,\ldots, i_s\}} l_{j,k}    \stackrel{(\ref{q1})}{=}\lambda_k(Q)-(l_{i_1,k}+\dots+l_{i_s,k}).
\]
Note that $l_{i,k+1} \geq l_{i,k}$ as one can obtain a $(k+1)$-path subgraph from a $k$-path subdigraph by deleting an edge (or if the $k$-path subdigraph consists of isolated vertices then adding another vertex.). In particular
\begin{equation} \label{lik} l_{i,k} \geq l_i \end{equation} and thus
\[    \lambda_k(B)     \leq\lambda_k(Q)-(l_{i_1}+\dots+l_{i_s}) \leq \lambda_k(Q)-q. \]

\noindent{\bf Case 2:}  $l_{i_1}+ \ldots + l_{i_s}< q < l_{i_1}+ \ldots+ l_{i_s}+ l_h$ for some indices $i_1,\ldots,i_s,h \in \{1,\ldots,t \}$ and $l_h=|H_h|$.
In this case $A$ consists of the graph induced by $H_{i_1},\ldots H_{i_s}$ and (any) $q'= q-l_{i_1} - \ldots -l_{i_s}$ vertices of $H_h$. The graph $B$ will be induced by the remaining vertices and in particular will have $l_{h}-q'$ vertices of $H_h$. Again, any path in $A$ is also a path in $Q$ and thus at most $l_{i_j}$ vertices of $H_{i_j}$ can belong to any path in $A$ and particularly to a longest path. Thus
\[
\lambda(A) \leq q'+ l_{i_1} +\dots+l_{i_s}  =q.
\]

For $B$ we note that
$|H_h| \geq l_{h,k} \geq l_h = |H_h|$  and thus
\begin{eqnarray}
\lambda_k(B)&\leq& l_{h,k} - q' +  \sum_{j \not\in \{i_1,\ldots, i_s, h\}} l_{j,k}\nonumber \\
            &\stackrel{(\ref{q1})}{=}& l_{h,k} - q'+ \lambda_k(Q)-(l_{i_1,k}+\dots+l_{i_s,k}+l_{h,k}) \nonumber \\
            &\leq& \lambda_k(Q)-q'-(l_{i_1,k}+\dots+l_{i_s,k}) \nonumber \\
             &\stackrel{(\ref{lik})}{\leq}& \lambda_k(Q)-q'-(l_{i_1}+\dots+l_{i_s}) \nonumber \\
           &\leq& \lambda_k(Q)-q.\nonumber
\end{eqnarray}

\noindent{\bf Case 3:} $l_{i_1}+ \ldots + l_{i_s} < q <  l_{i_1}+ \ldots + l_{i_s}+ l_m$ and for all $j\in \{i_1,\ldots,i_s, m\}$ we have $l_j<|H_j|$.

We first prove that the subgraph of the semicomplete graph $T$ that is induced by the vertices corresponding to $H_{i_1},\ldots, H_{i_s}, H_m$ is acyclic. So assume this graph has a cycle including the vertex corresponding  to $H_{j_1}\ldots H_{j_p}$.  For each $r\in \{1,\ldots,p\}$, any path visits $H_{j_r}$ at most $l_{j_r}<|H_{j_r}|$ times in $Q$ and therefore can be extended along the cycle avoiding the vertices on this path. In particular a longest path in $Q$ can be extended which means it was not the longest after all.

Note that since the subgraph of the semicomplete graph $T$ that is induced by the vertices corresponding to $H_{i_1},\ldots, H_{i_s}, H_m$ is acyclic, any  (longest) path in this graph will visits $H_m$ (and all other $H_{i_j}$) only once.

Now let $q' := q- l_{i_1}- \ldots -l_{i_s}$.
 Let $(A_m,B_m)$ be a partition of $H_m$ such that $\lambda(A_m) \leq q' $ and for all $k'\in \{1,\ldots, |V(B_m)|\}$, we have $\lambda_{k'}(B_m) \leq \lambda_{k'}(H_m) - q'$.
 Let $A$ be the graph induced by the vertices of $H_{i_1},\ldots, H_{i_s}, A_m$ and let $B$ be the graph induced by the remaining vertices. Because the underlying subgraph of $T$ induced by $A$ is acyclic as we discussed above, we have
\begin{eqnarray}
\lambda(A)&\leq & \lambda(H_{i_1})+ \ldots + \lambda(H_{i_s})+\lambda(A_m) \nonumber\\
          &\stackrel{(\ref{e1})}{\leq}& l_{i_1}+\ldots+ l_{i_s}+ q'  \nonumber\\
         &=& q.\nonumber
\end{eqnarray}

 Let $Q'=T[E_{n_1},\dots, E_{n_t}]$ be the extended semicomplete digraph obtained by removing all arcs in $H_i$ for $1\leq i\leq t$. Fix $k\in \{1,\ldots, |V(B)|\}$. Define $v_{i,k}$ as in Lemma \ref{l1} as the maximum number of vertices of $E_{n_i}$ that can be covered by a $k$-path subdigraph in Q'. Note that no $k$-path subdigraph in $Q$ can visit $H_m$ more than $v_{m,k}$ times, since each $k$-path subdigraph in $Q$ visiting $H_m$ more than $v_{m,k}$ times can be translated to a $k$-path subdigraph in $Q'$ containing more than $v_{m,k}$ vertices in $E_{n_m}$. Let $u_k=\min\{ v_{m,k},|V(B_m|)\}$.
If $u_k\geq 1$ then
\begin{eqnarray} \label{uk}
\lambda_{u_k}(B_m) &\leq& \lambda_{u_k}(H_m) -q'\leq  l_{m,k}-q'.
\end{eqnarray}
If $u_k=0$ (that is, if $\lambda(H_m)\leq q'$ and we can choose $A_m=H_m$ and $B_m$ to be empty) we set $\lambda_{u_k}= l_{m,k}-q'$.

Any (maximum) $k$-path subdigraph in $B$ is also a $k$-path subdigraph in $Q$,
thus any $k$-path subdigraph in $B$ can visit $B_m$ at most $u_k$ times. Therefore
\begin{eqnarray*}
 \lambda_k(B)  &\leq& \lambda_{u_k}(B_m)+ \sum_{j \not\in \{i_1,\ldots, i_s, m\}} l_{j,k} \\
             &\stackrel{(\ref{uk})}{\leq}& l_{m,k} - q' +  \lambda_k(Q) -  \sum_{j \in \{i_1,\ldots, i_s, m\}} l_{j,k} \\
           &\stackrel{(\ref{lik})}{\leq} & \lambda_k(Q) - q'-  \sum_{j \in \{i_1,\ldots, i_s\}} l_{j} \\
              &\leq& \lambda_k(Q)-q.
\end{eqnarray*}
\end{proof}

A digraph is {\em strong} if there is a directed path between any two vertices.
A digraph $D$ is {\em quasi-transitive}, if the existence of the arcs $xy$ and $yz$ implies an arc between $x$ and $z$.
A digraph is {\em locally in-semicomplete}, if the in-neighbours of every vertex induce a semicomplete digraph. Important results on the above three classes are overviewed in chapters of \cite{DiClasses}.

Strong semicomplete compositions generalize strong quasi-transitive digraphs \cite{BH}.
However, semicomplete compositions form a more complicated digraph class than quasi-transitive digraphs. For example, it is well known that the Hamilton Cycle  problem (HCP) is polynomial-time solvable for quasi-transitive digraphs \cite{GGG}. However, HCP is  $\mathcal{NP}$-complete for semicomplete compositions.  Indeed, it is well known that the Hamilton Path  problem is $\mathcal{NP}$-complete for arbitrary digraphs.
Let $D$ be a digraph. Form a new digraph $D'$ from $D$ by adding a new vertex $u$ with all arcs from $u$ to $D$ and a new vertex $v$ with all arcs from $D$ to $v$ and an arc $vu.$ Observe that $D'$ has a Hamilton cycle if and only if $D$ has a Hamilton path.
Thus, HCP is $\mathcal{NP}$-complete even for semicomplete compositions in which $T$ is a 2-cycle. We refer the readers to \cite{Y} for the structural properties of semicomplete compositions and some results on connectivity, paths, cycles, strong spanning subdigraphs and acyclic spanning subgraphs.

By Theorems \ref{T5} and \ref{T6} and  results in \cite{BMA} on BNY, we have the following:

\begin{cor} \label{corBNY}
Let $Q=T[H_1,\dots,H_t]$ be an acyclic or semicomplete composition such that each $H_i$ is a quasi-transitive digraph, an extended semicomplete digraph, a locally in-semicomplete digraph, or an acyclic digraph.
Then $Q$ satisfies BNY.
\end{cor}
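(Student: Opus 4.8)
The plan is to deduce the corollary directly from Theorems~\ref{T5} and~\ref{T6}, the only real content being to record that each of the four admissible classes of building blocks already satisfies property BNY. First I would isolate this as the key input: a quasi-transitive digraph, an extended semicomplete digraph, and a locally in-semicomplete digraph each satisfy BNY by the results of Bang-Jensen, Nielsen and Yeo~\cite{BMA}; and an acyclic digraph satisfies BNY by the remark immediately following Theorem~\ref{T5} (apply Theorem~\ref{T5} with $T$ the given acyclic digraph and every $H_i$ a single vertex, noting a single vertex trivially satisfies BNY). Thus in all four cases every $H_i$ in the statement satisfies BNY.

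Next I would split on the hypothesis. If $Q=T[H_1,\dots,H_t]$ is an acyclic composition, then since each $H_i$ satisfies BNY, Theorem~\ref{T5} gives that $Q$ satisfies BNY, and we are done. If instead $Q$ is a semicomplete composition, let $l_i$ be the maximum number of vertices of $H_i$ covered by a longest path of $Q$ and let $m$ be an index with $l_m=\max\{l_i\mid i\in[t]\}$. Since $H_m$ is one of the four admissible classes, $H_m$ satisfies BNY, so Theorem~\ref{T6} applies and yields that $Q$ satisfies BNY.

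There is essentially no obstacle here: the corollary is a packaging of Theorems~\ref{T5} and~\ref{T6} with the known base cases from~\cite{BMA}. The only point requiring a sentence of care is that Theorem~\ref{T6} needs merely $H_m$ (not every $H_i$) to satisfy BNY, which is why it suffices that the given list of classes is exactly the list for which BNY is known; and that for acyclic building blocks one should explicitly invoke the single-vertex base case rather than appealing to~\cite{BMA}. Once these two bookkeeping remarks are in place, the proof is a two-line case split.
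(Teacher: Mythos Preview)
Your proposal is correct and matches the paper's own treatment: the paper derives the corollary immediately from Theorems~\ref{T5} and~\ref{T6} together with the BNY results of~\cite{BMA}, exactly as you do. Your additional remarks---that acyclic digraphs satisfy BNY via the single-vertex base case and Theorem~\ref{T5}, and that Theorem~\ref{T6} only requires $H_m$ to satisfy BNY---are accurate elaborations of what the paper leaves implicit.
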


\begin{thebibliography}{11}
\bibitem{AG}
A. Arroyo and H. Galeana-S\'{a}nchez, The Path Partition Conjecture is true for some generalizations of tournaments. Discret. Math. 313(3): 293-300 (2013)
\bibitem{BG}
J. Bang-Jensen and G. Gutin, Digraphs: Theory, Algorithms and Applications, 2nd Edition, Springer, London, (2009)
\bibitem{DiClasses}
J. Bang-Jensen and G. Gutin (eds.), Classes of Directed Graphs, Springer, London, (2018)
\bibitem{BH} J. Bang-Jensen and. Huang,
Quasi-transitive digraphs. J. Graph Theory 20(2): 141-161 (1995)
1993
\bibitem{BMA}
J. Bang-Jensen, M.H. Nielsen and A. Yeo, Longest path partitions in generalizations of tournaments. Discret. Math. 306(16): 1830-1839 (2006)
\bibitem{BI}
I. Broere, M. Dorfling, J.E. Dunbar and M. Frick, A path (ological) partition problem. Discuss. Math. Graph Theory 18(1): 113-125 (1998)
\bibitem{BI2}
I. Broere, P. Hajnal and P. Mih\'{o}k, Partition problems and kernels of graphs. Discuss. Math. Graph Theory 17(2): 311-313 (1997)
\bibitem{BU}
F. Bullock, J.E. Dunbar and M. Frick, Path partitions and $P_n$-free sets. Discret. Math. 289(1-3): 145-155 (2004)
\bibitem{F}
M. Frick, S.A. Aardt, G. Dlamini, J.E. Dunbar and O.R. Oellermann, The directed path partition conjecture. Discuss. Math. Graph Theory 25(3): 331-343 (2005)
\bibitem{FM}
M. Frick, A survey of the Path Partition Conjecture. Discuss. Math. Graph Theory 33(1): 117-131 (2013)
\bibitem{GGG}
G. Gutin, Polynomial algorithms for finding paths and cycles in quasi-transitive digraphs. Australas. J Comb. 10: 231-236 (1994)
\bibitem{H}
P. Hajnal, Graph Partition (in Hungarian), Thesis, supervised by L. Lov\'{a}sz, J.A University, Szeged. (1984)
\bibitem{HW}
W. He and B. Wang, A note on path kernels and partitions. Discret. Math. 310(21): 3040-3042 (2010)
\bibitem{LC}
J.M. Laborde, C. Payan and N.H. Xuong, Independent sets and longest directed paths in digraphs, in: Graphs and other combinatorial topics. Prague. (1982)
\bibitem{Y}
Y. Sun, Compositions of Digraphs: A Survey. arXiv:2103.11171 (2021)
\bibitem{V}
J. Vronka, Vertex sets of graphs with.prescribed properties (in Slovak), Thesis, supervised by P. Mih\'{o}k, P.J. Saf\'{a}rik University, Ko\'{s}ice. (1986)
\bibitem{WW}
S. Wang and R. Wang, Independent sets and non-augmentable paths in arclocally in-semicomplete digraphs and quasi-arc-transitive digraphs. Discret. Math. 311(4): 282-288 (2011)

\end {thebibliography}

\end{document}